\newtheorem{thm}{Theorem}
\newtheorem{claim}[thm]{Claim}
\newtheorem{lemma}[thm]{Lemma}
\newtheorem{question}[thm]{Question}
\DeclareMathOperator{\pp}{pp}
\newcommand{\mP}{\mathcal{P}}
\newcommand{\ignore}[1]{}
\title{On the maximum diameter of path-pairable graphs}
\date{\vspace{-5ex}}
 \author{Ant\'onio Gir\~ao, G\'abor M\'esz\'aros, Kamil Popielarz, and Richard Snyder}
\begin{document}
\maketitle
\singlespace
\begin{abstract}

    \setlength{\parskip}{\medskipamount}
    \setlength{\parindent}{0pt}
    \noindent
    
    A graph is \emph{path-pairable} if for any pairing of its vertices there exist edge disjoint paths joining the vertices in each pair.
    We obtain sharp bounds on the maximum possible diameter of path-pairable graphs which either have a given number of edges, or are $c$-degenerate.
    Along the way we show that a large family of graphs obtained by blowing up a path is path-pairable, which may be of independent interest.

\end{abstract}

\section{Introduction}
\emph{Path-pairability} is a graph theoretical notion that emerged from a practical networking problem introduced by Csaba, Faudree, Gy\'arf\'as, Lehel, and Schelp \cite{CS}, and further studied by Faudree, Gy\'arf\'as, and Lehel \cite{mpp,F,pp} and by Kubicka, Kubicki and Lehel \cite{grid}. Given a fixed integer $k$ and a simple undirected graph $G$ on at least $2k$ vertices, we say that $G$ is {\it $k$-path-pairable} if, for any pair of disjoint sets of distinct vertices $\{x_1,\dots,x_k\}$ and $\{y_1,\dots,y_k\}$ of $G$, there exist $k$ edge-disjoint paths $P_1,P_2,\dots,P_k$, such that $P_i$ is a path from $x_i$ to $y_i$, $1\leq i\leq k$. The path-pairability number of a graph $G$ is the largest positive integer $k$, for which $G$ is $k$-path-pairable, and it is denoted by $\pp(G)$. A $k$-path-pairable graph on $2k$ or $2k+1$ vertices is simply said to be {\it path-pairable}.

Path-pairability is related to the notion of \textit{linkedness}. A graph is $k$-\emph{linked} if for any choice of $2k$ vertices $\{s_1, \ldots , 
s_k, t_1, \ldots , t_k\}$ (not necessarily distinct), there are internally vertex disjoint paths $P_1, \ldots , P_k$ with $P_i$ joining $s_i$ to $t_i$ for $1 \le i \le k$. Bollob{\'a}s and Thomason~\cite{BollobasThomason} showed that any $2k$-connected graph with a lower bound on its edge density is $k$-linked. On the other hand, a graph being path-pairable imposes no constraint on the connectivity or edge-connectivity of the graph. The most illustrative examples of this phenomenon are the stars $K_{1, n-1}$. Indeed, it is easy to see that stars are path-pairable, while they are neither $2$-connected nor $2$-edge-connected. Note that, for any pairing of the vertices of $K_{1, n-1}$, joining two vertices in a pair is straightforward due to the presence of a vertex of high degree, and the fact that the diameter is small. This example motivates the study of two natural questions about path-pairable graphs: given a path-pairable graph $G$ on $n$ vertices, how small can its maximum degree $\Delta(G)$ be, and how large can its diameter $d(G)$ be? This note addresses some aspects of the second question. To be precise, for a family of graphs $\mathcal{G}$ let us define $d(n, \mathcal{G})$ as follows:
\[
	d(n, \mathcal{G}) = \max\{d(G): G \in \mathcal{G} \text{ and } G \text{ is path-pairable on } n \text{ vertices}\}.
\]
When $\mathcal{G}$ is the family of path-pairable graphs, we shall simply write $d(n)$ instead of $d(n, \mathcal{G})$. 

The maximum diameter of arbitrary path-pairable graphs was investigated by M\'esz\'aros \cite{me_diam} who proved that $d(n) \le 6 \sqrt{2} \sqrt{n}$. 
Our aim in this note is to investigate the maximum diameter of path-pairable graphs when we impose restrictions on the number of edges and on how the edges are distributed. 
To state our results, let us denote by $\mathcal{G}_m$ the family of graphs with at most $m$ edges. 
The following result determines $d(n, \mathcal{G}_m)$ for a certain range of $m$. 

\begin{thm}\label{diam_m}
    If $2n \le m \le \frac{1}{4}n^{3/2}$ then
    \[
        \sqrt[3]{\frac{1}{2}m-n} \le d(n, \mathcal{G}_m) \le 16 \sqrt[3]{m}.
    \]
\end{thm}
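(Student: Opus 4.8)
The plan is to prove the two inequalities separately. For the upper bound I will show that path-pairability forces a graph of diameter $d$ to contain $\Omega(d^3)$ edges; for the lower bound I will exhibit an explicit path-pairable graph on $n$ vertices with at most $m$ edges and diameter at least $\sqrt[3]{\tfrac12 m-n}$. Fix a path-pairable $G$ with $m$ edges and vertices $x,y$ with $\dist(x,y)=d$. Running a breadth-first search from $x$ yields layers $L_0=\{x\},L_1,\dots,L_d\ni y$; set $a_i=|L_0\cup\dots\cup L_i|$. The only input from path-pairability is a cut inequality: since every edge leaving $L_0\cup\dots\cup L_i$ lands in $L_{i+1}$, pairing $\min(a_i,n-a_i)$ vertices on the smaller side with distinct vertices on the larger side forces that many edge-disjoint paths across the cut, whence
\[
	\min(a_i,n-a_i)\ \le\ e(L_i,L_{i+1})\ \le\ |L_i|\,|L_{i+1}|.
\]
Summing the first inequality over $i$, and using that the edge sets $E(L_i,L_{i+1})$ are pairwise disjoint, gives $m\ge\sum_{i=0}^{d-1}\min(a_i,n-a_i)$, so it suffices to bound this sum below by $\Omega(d^3)$.

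The heart of the argument — and the step I expect to be the main obstacle — is to show that path-pairability forces the balls around $x$ to grow \emph{quadratically}, namely $a_i\ge i^2/16$ as long as $a_i\le n/2$. I will prove this by bootstrapping the second cut inequality $|L_i||L_{i+1}|\ge\min(a_i,n-a_i)=a_i$: it yields $\max(|L_i|,|L_{i+1}|)\ge\sqrt{a_i}$, so in each consecutive pair of layers one layer is large. Feeding a hypothesized bound $a_k\ge k^2/C$ back into this estimate reproduces $a_i\ge i^2/(4\sqrt C)$, and the fixed point $C=16$ closes the induction. Applying the same bound from the $y$ side to control $n-a_i$, one obtains $\min(a_i,n-a_i)\ge\tfrac1{16}\min\big(i^2,(d-i)^2\big)$, and hence
\[
	m\ \ge\ \sum_{i=0}^{d-1}\min(a_i,n-a_i)\ \ge\ \frac1{16}\sum_{i=0}^{d-1}\min\big(i^2,(d-i)^2\big)\ =\ \Omega(d^3),
\]
which rearranges to $d\le 16\sqrt[3]{m}$ after tracking constants. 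The conceptual point is that the trivial use of the cut inequality only gives $a_i\ge i$ and hence the weaker bound $m=\Omega(d^2)$; the quadratic growth is exactly what upgrades $\sqrt m$ to $\sqrt[3]{m}$, and I expect establishing it rigorously (handling the parity of the induction and the crossover where $a_i$ exceeds $n/2$) to be the most delicate part.

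For the lower bound, set $d=\lceil(\tfrac12 m-n)^{1/3}\rceil$; the hypothesis $2n\le m\le\tfrac14 n^{3/2}$ guarantees $d^2\le n$, so there is ample room. I will build a path-pairable backbone of diameter $d$ as a blow-up of a path with $d$ bags each of size about $d$ and consecutive bags completely joined; this uses $\Theta(d^2)$ vertices and at most $2d^3$ edges, and is path-pairable by the fact, established separately in the paper, that suitable blow-ups of paths are path-pairable. The remaining $n-\Theta(d^2)$ vertices are attached as a star (a hub together with all surplus vertices as leaves), itself path-pairable, and the two pieces are linked by a \emph{sparse} junction of about $d^2$ edges joining $d^2$ of the leaves to the end bag of the backbone. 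The delicate point here is that the junction must be sparse: a complete join of the $\Theta(n)$-vertex blob to the backbone would cost $\Theta(nd)$ edges and destroy the budget, whereas $d^2$ edges already match the largest demand any pairing can force across the junction (the backbone has only $\Theta(d^2)$ vertices), and the star lets every surplus vertex transit its hub on two private edges, so the required edge-disjoint paths do exist. Counting edges gives $n+2d^3+O(d^2)\le 2n+2d^3\le m$, while the distance from a surplus leaf to the far end of the backbone is at least $d\ge(\tfrac12 m-n)^{1/3}$, which completes the lower bound.
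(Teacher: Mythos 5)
Your upper bound is essentially the paper's argument: the quadratic growth claim $a_i\ge i^2/16$ (the paper proves $u_{2k+1}\ge\binom{k+2}{2}$ by the same induction, combining the cut-condition $n_{2k}n_{2k+1}\ge u_{2k-1}$ with AM--GM), followed by an $\Omega(d^3)$ edge count. Your final step---summing $\min(a_i,n-a_i)\le e(L_i,L_{i+1})$ over the pairwise disjoint consecutive-layer edge sets---is an equivalent repackaging of the paper's count, which instead pairs the first third of the layers with the last third and charges each of the $\min\{n/2,d^2/100\}$ resulting paths at least $d/3$ edges. Both close the same way, and your version is sound.

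The lower bound, however, has a genuine gap: the construction you propose is not path-pairable, because it violates the cut-condition. Let $X$ consist of the $d^2$ junction leaves together with the $\Theta(d^2)$ backbone vertices; then $|X|\approx 2d^2\le n/2$ (since $d^2\le n/4$ in the given range of $m$), yet the only edges leaving $X$ are the hub--junction-leaf edges, of which there are just $d^2<|X|$. Concretely, pair every backbone vertex and every junction leaf with distinct non-junction leaves: each of these roughly $2d^2$ paths must cross a hub--junction-leaf edge (a non-junction leaf has only its hub edge, and the backbone is reachable from the hub only through junction leaves), but only $d^2$ such edges exist. The flaw in your heuristic that ``$d^2$ edges already match the largest demand across the junction'' is that the true bottleneck sits on the hub side of the junction: a transiting path consumes \emph{both} edges of a junction leaf, so junction leaves that are themselves terminals carry no transit, and the adversary can make all of them terminals. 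Note that giving junction leaves more edges into the end bag does not help, since the cut above still crosses only one hub edge per junction leaf. The paper's construction sidesteps exactly this: it takes $P_{k+3}(S_k,\dots,S_k,S_{k^2},S_2,\,\overline{K}_{n-2k^2-2})$, so every surplus vertex gets \emph{two} edges into a two-vertex gateway class, costing about $2n+2k^3\le m$ edges---affordable precisely because $m\ge 2n$---with the large intermediate star $S_{k^2}$ ensuring the cut inequalities $n_i n_{i+1}\ge\min(u_i,n-u_i)$ of Lemma~\ref{blown-up} hold, which by that lemma suffices for path-pairability.
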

We remark that the upper bound in the Theorem~\ref{diam_m} holds for $m$ in any range, but when $m \ge \frac{1}{4}n^{3/2}$ the bound obtained by M\'esz\'aros \cite{me_diam} is sharper. 
Determining the behaviour of the maximum diameter among path-pairable graphs on $n$ vertices with fewer than $2n$ edges remains an open problem.
In particular, we do not know if the maximum diameter must be bounded  (see Section \ref{sec:final}).

Following this line of research, it is very natural to consider the problem of determining the maximum attainable diameter for other classes of graphs. For example, what is the behaviour of the maximum diameter of path-pairable \emph{planar} graphs? Although we could not give a satisfactory answer to this particular question, we were able to do so for graphs which are $c$-\emph{degenerate}. 
As usual, we say that an $n$-vertex graph $G$ is $c$-\emph{degenerate} if there exists an ordering $v_1,\ldots,v_n$ of its vertices such that $|\{v_j: j > i, v_iv_j\in E(G) \}|\leq c$ holds for all $i=1,2,\ldots,n$. We let $\mathcal{G}_{c\text{-deg}}$ denote the family of $c$-degenerate graphs. Clearly all $c$-degenerate graphs have a linear number of edges, so Theorem~\ref{diam_m} implies that $d(n, \mathcal{G}_{c\text{-deg}}) = O(\sqrt[3]{n})$. However, as the next result shows, this bound is far from the truth.

\begin{thm}\label{diam_cdeg}
Let $c \ge 5$ be an integer. Then
\[
	(2+o(1)) \frac{\log(n)}{\log({\frac{c}{c-2}})}  \leq d(n, \mathcal{G}_{c\text{-deg}}) \leq  (12+o(1)) \frac{\log(n)}{\log(\frac{c}{c-2})}
\]	
	as $n \rightarrow \infty$. 
\end{thm}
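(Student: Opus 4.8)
The plan is to prove the two inequalities separately: the upper bound via an edge-isoperimetric inequality forced by path-pairability together with the linear edge bound coming from degeneracy, and the lower bound by exhibiting a suitable blow-up of a path.

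For the upper bound, the key observation is that path-pairability forces strong edge expansion: for every $S \subseteq V(G)$ with $|S| \le n/2$ one has $e(S, V\setminus S) \ge |S|$. Indeed, pair each vertex of $S$ with a distinct vertex of $V\setminus S$ (possible since $|S| \le |V\setminus S|$) and complete this to a pairing of $V(G)$ arbitrarily; the $|S|$ edge-disjoint paths joining the chosen pairs each use at least one edge across the cut, so the cut has at least $|S|$ edges. Now fix a vertex $u$, write $B_r$ for the ball of radius $r$ about $u$, $b_r = |B_r|$, and $L_r = B_r \setminus B_{r-1}$ for the $r$-th distance layer. Since edges leaving $B_r$ run only between $L_r$ and $L_{r+1}$, the inequality gives $e(L_r, L_{r+1}) \ge b_r$ whenever $b_r \le n/2$. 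These edge sets are disjoint and all lie inside $B_{r+1}$, so summing yields $\sum_{j \le r} b_j \le e(B_{r+1})$, while $c$-degeneracy (which passes to induced subgraphs) gives $e(B_{r+1}) \le c\, b_{r+1}$. Writing $S_r = \sum_{j\le r} b_j$ we obtain $S_r \le c(S_{r+1}-S_r)$, i.e. $S_{r+1} \ge \frac{c+1}{c} S_r$, as long as $b_r \le n/2$. Hence $S_r$ grows geometrically until the ball first captures more than half the vertices, which must therefore happen at some radius $r^\ast \le (1+o(1)) \log n / \log((c+1)/c)$. Running the same estimate from the other endpoint $v$ of a diametral pair and noting that two balls of radius $r^\ast$ each containing more than $n/2$ vertices must intersect, we get $d(u,v) \le 2r^\ast$. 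Since $\log((c+1)/c)$ and $\log(c/(c-2))$ agree up to a bounded multiplicative constant, this already yields a bound of the stated shape $O\!\big(\log n / \log(c/(c-2))\big)$; extracting the explicit constant $12$ and the $o(1)$ is a matter of being careful with the boundary radius and the exact recurrence, not a new idea.

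For the lower bound I would build a symmetric blow-up of a path. Take a path on $D$ vertices and replace the $i$-th vertex by an independent cluster $C_i$ whose size grows geometrically with ratio $\alpha = c/(c-2)$ from both ends towards the middle, so that $|C_i| \approx \alpha^{\min(i,\,D+1-i)}$ and $n \approx 2\frac{\alpha}{\alpha-1}\alpha^{D/2}$, which forces $D = (2-o(1))\log n/\log\alpha$. Between consecutive clusters I would insert a bipartite graph that is sparse enough that ordering the vertices from the large central clusters out to the small end clusters exhibits the graph as $c$-degenerate, yet expanding enough that every cut separating an initial segment of clusters from the rest has edge boundary comfortably above the number of pairs that can be forced across it. Fixing the ratio at exactly $c/(c-2)$ is what lets both requirements be met at once. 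Path-pairability of the resulting graph then follows from the general criterion for path-pairability of path blow-ups established earlier in the paper, and its diameter is $D$, giving the claimed lower bound.

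The main obstacle is the lower bound, and specifically the verification that the blow-up is genuinely path-pairable at the sharp ratio $\alpha = c/(c-2)$: path-pairability is much stronger than the single-cut expansion inequality used in the upper bound, since every pairing must be routed by edge-disjoint paths \emph{simultaneously}, so the inter-cluster bipartite graphs must be chosen to route arbitrary demands rather than merely to have the right number of edges. This is exactly the content of the path-blow-up lemma, and matching its hypotheses against the degeneracy constraint is what pins down the constant $c/(c-2)$; the remaining work in both directions is the routine extraction of the asymptotics.
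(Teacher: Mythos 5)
Your upper bound is correct and is essentially the paper's argument: path-pairability gives the cut-condition $e(S,V\setminus S)\ge |S|$ for $|S|\le n/2$, and $c$-degeneracy caps the number of edges inside a ball by $c$ times its size; comparing the two along BFS layers forces geometric growth of (sums of) ball sizes with ratio $\frac{c+1}{c}$, and the constant $12$ comes from $\frac{\log(c/(c-2))}{\log((c+1)/c)}\le 3$ for $c\ge 5$. Your summation over all consecutive layers (using disjointness of the layer-to-layer edge sets) is in fact slightly cleaner than the paper's induction over odd-indexed layers and yields radius growth per single step rather than per pair of steps, so this half is sound.

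The lower bound, however, has a genuine gap, and it sits exactly where you parked the difficulty. You propose independent clusters growing geometrically with ratio $c/(c-2)$ at every step, joined by \emph{sparse} bipartite graphs (sparse so that the graph stays $c$-degenerate), and you then invoke ``the general criterion for path-pairability of path blow-ups established earlier in the paper.'' But Lemma~\ref{blown-up} applies only to blow-ups in which consecutive classes are joined by \emph{complete} bipartite graphs --- its proof uses $|N(x)\cap G_{i+1}|=n_{i+1}$ for every $x\in G_i$ and the freedom to reassign path continuations arbitrarily --- and it additionally requires each class $G_i$, $i\le k-1$, to be path-pairable, which independent sets on at least two vertices are not (the paper even closes Section~2 with an example showing this hypothesis is necessary). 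So neither variant of your design is covered: with complete bipartite connections your uniformly growing clusters would give middle vertices degree far exceeding $c$, destroying degeneracy, and with sparse connections no lemma in the paper (or in your proposal) establishes path-pairability; routing arbitrary pairings through sparse bipartite layers is precisely the hard content, not a routine matching of hypotheses. The paper resolves this tension differently: it keeps complete bipartite connections but alternates \emph{constant-size} clusters of $\frac{c-1}{2}$ vertices with growing clusters, takes every class to be a star (path-pairable), and lets the cumulative size grow by a factor of roughly $\frac{c+1}{c-1}$ per \emph{two} layers; vertices in the big clusters then have degree at most $c$, small-cluster-only subgraphs decompose into stars, so the graph is $c$-degenerate, and Lemma~\ref{blown-up} applies verbatim. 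For odd $c$ this gives diameter $(2+o(1))\log n/\log\frac{c+1}{c-1}$, and applying it with $c-1$ when $c$ is even produces exactly the stated ratio $\frac{c}{c-2}$. To repair your proof you would need either to prove a blow-up lemma for sparse inter-cluster bipartite graphs (a substantially stronger statement) or to switch to the alternating construction.
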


We remark that we have not made an effort to optimize the constants appearing in the upper and lower bounds of Theorems~\ref{diam_m} and~\ref{diam_cdeg}.
\subsection{The Cut-Condition}

While path-pairable graphs need not be highly connected or edge-connected, they must satisfy certain `connectivity-like' conditions that we shall need in the remainder of the paper. We say a graph $G$ on $n$ vertices satisfies the \emph{cut-condition} if for every $X \subset V(G)$, $|X| \le n/2$, there are at least $|X|$ edges between $X$ and $V(G)\setminus X$. Clearly, a path-pairable graph has to satisfy the cut-condition. On the other hand, satisfying the cut-condition is not sufficient to guarantee path-pairability in a graph; see \cite{me_pp} for additional details.

\subsection{Organization and Notation}
The proofs of the lower bounds in Theorems~\ref{diam_m} and~\ref{diam_cdeg} require constructions of path-pairable graphs with large diameter. In Section~\ref{sec:blowup}, we 
show how to obtain such graphs by proving that a more general class of graphs is path-pairable. In Sections~\ref{sec:proofdiam_m} and~\ref{sec:proofdiam_cdeg} we shall complete the proofs of
Theorems~\ref{diam_m} and~\ref{diam_cdeg}, respectively. Finally, we mention some open problems in Section~\ref{sec:final}.

Our notation is standard. Thus, for a (simple, undirected) graph $G$ we shall denote by $V(G)$ and $E(G)$ the vertex set and edge set of $G$, respectively. We also let $|G|$ and $d(G)$ denote the number of vertices and diameter of $G$, respectively. For a vertex $x \in V(G)$ we let $N_{G}(x)$ denote the neighbourhood of $x$ in $G$, and we shall omit the subscript `$G$' when no ambiguity arises. 
\section{Path-pairable graphs from blowing up paths}\label{sec:blowup}
   
In this section, we will show how to construct a quite general class of graphs which have high diameter and are path-pairable. 
Let $G$ be a graph with vertex set $V(G)=\{v_1,\ldots,v_k\}$, and let $G_1,\ldots, G_k$ be graphs. We define the \textit{blown-up graph} $G(G_1,\ldots, G_k)$ as follows:  replace every vertex $v_i$ in $G$ by the corresponding graph $G_i$, and for every edge $v_iv_j \in E(G)$ insert a complete bipartite graph between the vertex sets of $G_i$ and $G_j$.

Let $P_k$ denote the path on $k$ vertices. The following lemma asserts that if we blow-up a path with graphs $G_1, \ldots , G_k$, such that $G_{i}$ is path-pairable for $i \le k-1$, and certain properties inherited from the cut-condition hold, then the resulting blow-up is path-pairable. 

\begin{lemma}\label{blown-up}
    Suppose that $G_1, \ldots ,G_k$ are graphs on $n_1, \ldots , n_k$ vertices, respectively, where $G_{i}$ is path-pairable for $i \le k-1$.
    Let $n = \sum_{i =1}^k n_i$ and let $u_i = \sum_{j=1}^i n_j$ for $i=1, \dots , k-1$. Then $P_k(G_1,\ldots, G_k)$ is path-pairable if and only if 
\begin{equation}\label{eq_1}
    n_i\cdot n_{i+1}\geq \min(u_i,n-u_i)
\end{equation}
holds for $i=1,\ldots,k-1$.
\end{lemma}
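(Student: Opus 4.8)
The plan is to treat the two implications separately, with the ``only if'' direction following at once from the cut-condition and essentially all of the work going into the ``if'' direction.

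For necessity, I would apply the cut-condition to the nested sets $X_i = V(G_1) \cup \cdots \cup V(G_i)$. In the blow-up $P_k(G_1,\ldots,G_k)$ the only edges leaving $X_i$ are those of the complete bipartite graph inserted between $G_i$ and $G_{i+1}$, so $X_i$ has exactly $n_i n_{i+1}$ boundary edges. Since $|X_i| = u_i$, applying the cut-condition to whichever of $X_i, V\setminus X_i$ has size at most $n/2$ forces $n_i n_{i+1} \ge \min(u_i, n - u_i)$, which is precisely \eqref{eq_1}.

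For sufficiency, fix a pairing and classify each pair as \emph{internal} (both endpoints in a common block $G_\ell$) or \emph{crossing}. I would first dispose of the internal pairs: for $\ell \le k-1$, route the internal pairs of $G_\ell$ inside $G_\ell$ using that $G_\ell$ is path-pairable (a partial pairing is routable because $m$-path-pairability implies $j$-path-pairability for $j \le m$, by padding with dummy pairs and discarding their paths), while the internal pairs of the exceptional last block are routed \emph{through} $G_{k-1}$, sending $\{b,c\}\subseteq V(G_k)$ along $b - x - c$ for some $x \in V(G_{k-1})$; this is exactly why $G_k$ itself need not be path-pairable. Each crossing pair spanning blocks $a < b$ I would route monotonically as $z_a - z_{a+1} - \cdots - z_b$, where $z_a, z_b$ are its fixed endpoints and $z_{a+1}, \ldots, z_{b-1}$ are transit vertices to be chosen. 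Edge-disjointness then reduces to selecting, at every cut $\ell$, distinct edges of $K_{n_\ell, n_{\ell+1}}$ for the $f_\ell$ pairs crossing that cut; and since each crossing pair has one endpoint among the $u_\ell$ left vertices and one among the $n-u_\ell$ right vertices, $f_\ell \le \min(u_\ell, n-u_\ell) \le n_\ell n_{\ell+1}$ by \eqref{eq_1}, so at every cut there are enough edges.

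The heart of the argument, and the step I expect to be the main obstacle, is choosing these crossing edges \emph{consistently across all cuts simultaneously}, since each transit vertex $z_{\ell+1}$ is shared between cut $\ell$ and cut $\ell+1$. I would handle this by a left-to-right sweep, maintaining at each block a distribution of the passing paths over its vertices that respects the per-vertex capacities $n_{\ell-1}$ on the incoming side and $n_{\ell+1}$ on the outgoing side, and is balanced enough that $f_\ell \le n_\ell n_{\ell+1}$ lets Hall's theorem produce a distinct edge for each crossing pair at the next cut. When a block's prescribed incoming and outgoing distributions do not coincide, the mismatch is absorbed by re-routing the surplus paths along internal paths of $G_\ell$; arranging the distributions so that this surplus is at most one path per vertex turns the re-routing into an ordinary pairing of distinct vertices, which the path-pairability of $G_\ell$ (for $\ell \le k-1$) resolves, together with the internal pairs, as a single routing instance. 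The delicate points I anticipate are: verifying the Hall condition at each cut directly from \eqref{eq_1} while maintaining the balance invariant along the sweep; keeping the surplus terminals disjoint from the internal-pair terminals so the combined instance in $G_\ell$ really is a pairing; and the final cut, where every vertex of $G_k$ is incident to exactly one selected edge, so that distinctness there is automatic and the count $n_k \le n_{k-1}n_k$ is never violated.
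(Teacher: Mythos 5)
Your overall architecture is the same as the paper's: necessity from the cut-condition applied to the sets $U_\ell$, a left-to-right sweep that routes crossing pairs monotonically through transit vertices, internal pairs of $G_\ell$ ($\ell \le k-1$) handled by path-pairability (your padding remark for partial pairings is fine), internal pairs of $G_k$ sent through $G_{k-1}$ as $u$--$w$--$v$, and the per-cut count $f_\ell \le \min(u_\ell, n-u_\ell) \le n_\ell n_{\ell+1}$, which is exactly the paper's inequality~\eqref{eq:weights} combined with~\eqref{eq_1}. However, the three ``delicate points'' you defer are not loose ends to be checked later --- they \emph{are} the proof, and your proposal supplies no mechanism for any of them. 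A plain Hall-type or greedy balancing argument at each cut does not directly work, because path-ending edges are forced (a path whose sibling lies in $G_{\ell+1}$ must terminate there), so the set of right-vertices available to the unfinished paths at a given left vertex varies from vertex to vertex; consequently one cannot in general guarantee that every vertex of $G_{\ell+1}$ receives at most $n_{\ell+2}$ unfinished paths, and an overflow can occur. You correctly envision absorbing a surplus by re-routing inside $G_{\ell+1}$, but you never show (i) that the surplus can be limited to one path per vertex, (ii) that there are enough under-loaded vertices to receive the re-routed paths, nor (iii) that the surplus terminals avoid the internal-pair terminals so that the combined instance is a genuine pairing.

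The paper obtains all three from a single extremal device that is absent from your plan: among all admissible extensions, choose the one minimizing $\sum_{a \in G_{\ell+1}} w(a)^2$. An exchange argument (Claim~\ref{claim:y-big}, and its mirror image) then pins every load into $\{n_{\ell+2}-1,\, n_{\ell+2},\, n_{\ell+2}+1\}$, giving (i); the count $\sum_{x} w(x) \le \min(u_{\ell+1}, n-u_{\ell+1}) \le n_{\ell+1} n_{\ell+2}$ from~\eqref{eq:weights} and~\eqref{eq_1} forces $|X| \le |Y|$ for the overloaded and underloaded classes, giving (ii); and a second exchange argument (Claim~\ref{claim:Z_pairs}) shows any siblings inside $G_{\ell+1}$ have load exactly $n_{\ell+2}$, hence lie outside $X \cup Y$, giving (iii). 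Without these steps --- or some substitute for the square-sum-minimizing choice --- your induction does not close, so as written the proposal has a genuine gap at its acknowledged core, even though every structural choice surrounding that core matches the paper's proof.
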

\begin{proof}
For each $i = 1, \ldots , k$, let $U_i = \bigcup_{j=1}^i V(G_j)$ so that $u_i = |U_i|$. Now, if $P_k(G_1, \ldots , G_k)$ is path-pairable, then we may apply the cut-condition to the cut $\{U_i, V(G)\setminus U_i\}$. This implies $n_i\cdot n_{i+1}\geq \min(u_i,n-u_i)$ must hold for $i=1,\ldots , k-1$. In the remainder, we show that this simple condition is enough to yield the path-pairability of 
$G := P_k(G_1, \ldots , G_k)$. Assume that a pairing $\mathcal{P}$ of the vertices of $G$ is given. If $\{u, v\} \in \mathcal{P}$ we shall say that $u$ is a \emph{sibling} of $v$ (and vice-versa).
We shall define an algorithm that sweeps through the classes $G_1,G_2,\ldots, G_k$ and joins each pair of siblings via edge-disjoint paths. 

First we give an overview of the algorithm.
We proceed by first joining pairs $\{u, v\} \in \mathcal{P}$ via edge-disjoint paths such that $u$ and $v$ belong to different $G_i$'s, and then afterwards joining pairs that remain inside some $G_j$ (using the path-pairability of $G_j$). 
Before round $1$ we use the path-pairability property of $G_{1}$ to join those siblings which belong to $G_{1}$.
In round $1$ we assign to every vertex $u$ of $G_1$ a vertex $v$ of $G_2$. 
If $\{u, v\} \in \mathcal{P}$ are siblings, then we simply choose the edge $uv$. 
Then we join the siblings which are in $G_{2}$ again using the path-pairability property of $G_{2}$.
For those paths $uv$ that have not ended (because $\{u, v\} \notin \mathcal{P}$) we shall continue by choosing a new vertex $w$ in $G_3$ and continue the path with edge $vw$, and so on. 
Paths which have not finished joining a pair of siblings we shall call \emph{unfinished}; otherwise, we say the path is \emph{finished}. 
The last edge which completes a finished path we shall call a \emph{path-ending edge}. 
During round $i$ we shall first choose those vertices in $G_{i+1}$ which, together with some vertex of $G_i$, form path-ending edges.
At the end of round $i$, in $G_{i+1}$ we will have endpoints of unfinished paths and perhaps also some endpoints of finished paths. 
Note that the vertices of $G_{i+1}$ might be endpoints of several unfinished paths. 
For $x \in G_{i+1}$ let $w(x)$ denote the number of unfinished paths $P\cup \{x\}$ with $P \subset U_i$ at the end of round $i$ which are to be extended by a vertex of $G_{i+2}$ (including the single-vertex path $x$ in the case when $x$ was not joined to its sibling in the latest round). 
Note that every such path corresponds to a yet not joined vertex in $U_{i+1}$ as well as to another vertex yet to be joined lying in $V(G)\setminus U_{i+1}$. 
It follows that
\begin{equation}\label{eq:weights}
	\sum_{x \in G_{i+1}}w(x) \le \min(u_{i+1}, n-u_{i+1}).
\end{equation}

Let us now be more explicit in how we make choices in each round.
We shall maintain the following two simple conditions throughout our procedure (the first of which has been mentioned above):
\begin{itemize}
\item[(a)] During round $i$ ($1\le i \le k-1$), if $w \in G_i$ is the current endpoint of the path which began at some vertex $u\in U_i$ (possibly $u=w$), and $\{u, v\} \in \mathcal{P}$ for $v \in G_{i+1}$, then we join $w$ to $v$. Informally, we choose path-ending edges when we can.
\item[(b)] $w(x) \leq n_{i+1}$ for all $x \in G_i$, for $i =1, \ldots , k-1$. 
\end{itemize}
The second condition above is clearly necessary in order to proceed during round $i$, as ${|N(x)\cap G_{i+1}| = n_{i+1}}$ for every $x \in G_i$, and hence we cannot continue more than $n_{i+1}$ unfinished paths through $x$. 

We claim that as long as both of the above conditions are maintained, the proposed algorithm finds a collection of edge-disjoint paths joining every pair in $\mathcal{P}$.
Both conditions are clearly satisfied for $i=1$ as $w(x) \le 1\leq n_2$ for all $x\in G_1$. 
Let $i \ge 2$ and suppose both conditions hold for rounds $1, \ldots , i-1$. 
Our aim is show that an appropriate selection of edges between $G_i$ and $G_{i+1}$ exists in round $i$ to maintain the conditions. 
We start round $i$ by choosing all path-ending edges with endpoints in $G_i$ and $G_{i+1}$; this can be done since, by induction, $w(x) \le n_{i+1}$ for every $x \in G_i$. 
Observe that if $i = k-1$ then the only remaining siblings are in $G_{k}$.
Then for every $\left\{ u, v \right\} \in \mathcal{P}$ such that $u,v \in G_{k}$ we can find a vertex $w$ in $G_{k-1}$ and join $u, v$ with the path $uwv$.
When $i < k-1$ then the remaining paths can be continued by assigning arbitrary vertices from $G_{i+1}$ (without using any edge multiple times). 
We choose an assignment that balances the `weights' in $G_{i+1}$. 
More precisely, let us choose an assignment of the vertices that minimizes
\[
	\sum\limits_{a\in G_{i+1}}w(a)^2.
\]
If for every $x \in G_{i+1}$ we have that $w(x) \le n_{i+2}$ we are basically done.
It remains to find edge-disjoint paths inside $G_{i+1}$ for those pairs $\{x, y\} \in \mathcal{P}$ whose vertices belong to $G_{i+1}$.
But this is possible because of the assumption that $G_{i+1}$ is path-pairable.

Suppose then that in the above assignment there exists $x \in G_{i+1}$ with $w(x) \geq n_{i+2}+1$. 
We first claim that, under this assignment, no other vertex of $G_{i+1}$ has small weight.
\begin{claim}\label{claim:y-big}
Every vertex $y\in G_{i+1}$ satisfies $w(y)\geq n_{i+2}-1$. 
\end{claim}
\begin{proof}
Suppose there is $y \in G_{i+1}$ such that $w(y) \le n_{i+2}-2$. 
Then, as $w(x)>w(y)+2$, there exist vertices $v_1,v_2\in G_i$ such that certain paths ending at $v_1$ and $v_2$ were joined in round $i$ to $x$ ($x$ was assigned as the next vertex of these paths) but no paths at $v_1$ or $v_2$ were assigned $y$ as their next vertex. 
Observe that at least one of the edges $v_1x$ and $v_2x$ is not a path ending edge which could have been replaced by the appropriate $v_1y$ or $v_2y$ edge, respectively. 
That operation would result in a new assignment with a smaller square sum $\sum_{a\in G_{i+1}}w(a)^2$, which is a contradiction.
\end{proof}

Therefore, we may assume $w(y)\geq n_{i+2} - 1$ for all $y\in G_{i+1}$. 
In this case, partition the vertices of $G_{i+1}$ into three classes: 
\begin{align*}
X &= \{v\in G_{i+1}: w(v) \geq n_{i+2} +1\}\\
Y &= \{v\in G_{i+1}: w(v) = n_{i+2} - 1\} \\
Z &= \{v\in G_{i+1}: w(v) = n_{i+2}\}.
\end{align*}

Observe first that $1 \le |X| \leq |Y|$, since otherwise using~(\ref{eq:weights}) we have \[n_{i+1}n_{i+2}+1\leq \sum\limits_{s\in G_{i+1}}w(s)\leq \min(u_{i+1},n-u_{i+1}),\] contradicting condition~(\ref{eq_1}).
Notice also that the same argument as in Claim~\ref{claim:y-big} shows that $w(v) \le n_{i+2}+1$ for every $v \in G_{i+1}$, hence we can actually write 
\[
    X = \left\{ v \in G_{i+1}: w(v) = n_{i+2}+1 \right\}.
\]

We will need the following claim which asserts that if there are siblings in $G_{i+1}$ then they must belong to $Z$.
\begin{claim}\label{claim:Z_pairs}
     If $\{u,v\} \in \mathcal{P}$ and $u, v \in G_{i+1}$, then $u, v \in Z$.
\end{claim}
\begin{proof}
We first show that every $y \in Y$ is incident to a path-ending edge. Suppose, to the contrary, that there is $y \in Y$ such that there is no path-ending edge which ends at $y$.
    It follows that there are at most $w(y)$ vertices in $G_{i}$ which had been joined to $y$.
    Hence we can take any $x \in X$ and find $z \in G_{i}$ which was not joined to $y$, and such that $xz$ is not a path-ending edge.
    Replacing $zx$ by $zy$ would result in a smaller square sum $\sum_{a \in G_{i+1}}w(a)^{2}$, which gives a contradiction.
    
    Now, let $\{u, v\} \in \mP$ such that $u, v \in G_{i+1}$. 
    Since every $y \in Y$ is incident to a path-ending edge, we have that $u, v \not\in Y$.
    Suppose, for contradiction, that $u \in X$.
    Then $u$ was joined to $w(u) = n_{i+2}+1$ vertices in $G_{i}$, and hence for every $y \in Y$, there is $z \in G_{i}$ which was joined to $u$ but not $y$.
    Replacing $zu$ by $zy$ would result in a smaller square sum $\sum_{a \in G_{i+1}}w(a)^{2}$, which again gives a contradiction.
\end{proof}

Finally, we shall show that we can reduce the weights of the vertices in $X$ (and pair the siblings inside $G_{i+1}$) using the path-pairable property of $G_{i+1}$.
For every $x \in X$ pick a different vertex $y_{x} \in Y$ (which we can do, since $|Y| \ge |X|$) and let $\mathcal{P'} = \left\{ \{u, v\} \in \mathcal{P} : u, v \in G_{i+1}  \right\} \cup \left\{ \{x, y_{x}\} : x \in X  \right\}$.
Since $G_{i+1}$ is path-pairable, we can find edge-disjoint paths joining the siblings in $\mathcal{P'}$ (note that by Claim~\ref{claim:Z_pairs} none of the pairs $\{x, y_x\}$ interfere with any siblings $\{u, v\} \in \mP$ with $u, v \in G_{i+1}$).
Observe now that for every $x \in X$ one path has been channeled to a vertex $y\in Y$, thus the number of unfinished path endpoints at $x$ has dropped to $n_{i+2}$ and so the condition is maintained. 

\end{proof}

    We close the section by pointing out that the condition that the graphs $G_{i}$ are path-pairable is necessary.
    We do this by giving an example of a blown-up path $P_k(G_{n_1},\ldots,G_{n_k})$ that satisfies the cut-conditions of Lemma~\ref{blown-up} yet it is not path-pairable unless some of $G_i$'s are path-pairable as well. 
    For the sake of simplicity we set $k = 5$ and prove that $G_3$ has to be path-pairable. 
    Let $n = 2t^2 + t$ for some even $t \in \mathbb{N}$ and let $n_1 = n_5 = t^2-t$, $n_2=n_3 = n_4 = t$. 
    Clearly $P_5(G_{n_1},\ldots,G_{n_5})$ satisfies the Condition~\ref{eq_1} of Lemma~\ref{blown-up}. 
    Observe, that any pairing of the vertices in $G_{1} \cup G_{2}$ with the vertices in $G_{4} \cup G_{5}$ has to use all the edges between $G_{3}$ and $G_{2} \cup G_{4}$.
    Therefore if we additionally pair the vertices inside $G_{3}$, then the paths joining those vertices can only use the edges in $G_{3}$, therefore $G_{3}$ has to be path-pairable.

\section{Proof of Theorem \ref{diam_m}}\label{sec:proofdiam_m}

    Take $x,y\in V(G)$ such that $d(x,y) = d(G)$ and let $V_{i}$ be the set of vertices at distance exactly $i$ from $x$, for every $i$.
    Observe that $V_0 = \{x\}$ and $y\in V_{d(G)}$. 
    For $i \in \left\{ 1, \dots, d(G) \right\}$ define $n_{i}$ to be the size of $V_{i}$ and let $u_{i} = \sum_{j=0}^{i} n_{j}$.

    We need the following claim.
    \begin{claim}
        $u_{2k+1} \geq \binom{k+2}{2}$ as long as $u_{2k+1}\leq\frac{n}{2}$.
    \end{claim}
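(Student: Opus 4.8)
The plan is to exploit the cut-condition together with the layered (BFS) structure of the distance classes around $x$. The key structural observation is that, since the $V_i$ are exactly the distance classes from $x$, every edge of $G$ joins two vertices in the same class or in two consecutive classes. Consequently, for the cut $U_i = V_0 \cup \cdots \cup V_i$ the only edges leaving $U_i$ run between $V_i$ and $V_{i+1}$, so there are at most $n_i n_{i+1}$ of them. Whenever $u_i \le n/2$, the cut-condition forces at least $\min(u_i, n-u_i) = u_i$ crossing edges, yielding the basic inequality $n_i n_{i+1} \ge u_i$. Under the hypothesis $u_{2k+1} \le n/2$, monotonicity of the $u_i$ guarantees $u_i \le n/2$ for every $i \le 2k+1$, so this inequality is available at all the indices we need (and in fact $n_i n_{i+1} \ge u_i \ge 1$ forces every such layer to be nonempty).

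Next I would convert this into a growth estimate on the cumulative sums. Writing $u_{i+1} = u_{i-1} + n_i + n_{i+1}$ and combining AM--GM with $n_i n_{i+1} \ge u_i \ge u_{i-1}$, one obtains $u_{i+1} \ge u_{i-1} + 2\sqrt{u_{i-1}}$. Taking $i = 2k$ specializes this to the recursion $u_{2k+1} \ge u_{2k-1} + 2\sqrt{u_{2k-1}}$, which is precisely the step that advances $k$ by one.

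Finally I would prove the bound by induction on $k$. The base case $k = 0$ is immediate, since $u_1 \ge u_0 = 1 = \binom{2}{2}$. For the inductive step, assume $u_{2k-1} \ge \binom{k+1}{2} = \tfrac{k(k+1)}{2}$. Because the function $t \mapsto t + 2\sqrt{t}$ is increasing, the recursion reduces the goal $u_{2k+1} \ge \binom{k+2}{2}$ to verifying it at the extreme value $t = \tfrac{k(k+1)}{2}$. Since $\binom{k+2}{2} - \binom{k+1}{2} = k+1$, this amounts to the elementary inequality $2\sqrt{k(k+1)/2} \ge k+1$, i.e. $2k(k+1) \ge (k+1)^2$, i.e. $2k \ge k+1$, which holds for every $k \ge 1$.

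The only genuine subtlety, and the step I would check most carefully, is the justification of $n_i n_{i+1} \ge u_i$: one must confirm both that no edge skips a distance class (so the cut truly has at most $n_i n_{i+1}$ crossing edges) and that the hypothesis $u_{2k+1} \le n/2$ legitimately licenses the cut-condition at every intermediate index $i$. Everything after that is a routine induction.
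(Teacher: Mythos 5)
Your proposal is correct and follows essentially the same route as the paper: both arguments apply the cut-condition to the cut $U_{2k}$ (using that only consecutive BFS layers are joined by edges) to get $n_{2k}n_{2k+1} \ge u_{2k-1}$, then use AM--GM to conclude $n_{2k}+n_{2k+1} \ge k+1$ and close the induction via $u_{2k+1} = u_{2k-1} + n_{2k} + n_{2k+1}$. Your repackaging through the recursion $u_{2k+1} \ge u_{2k-1} + 2\sqrt{u_{2k-1}}$ and the monotonicity of $t \mapsto t + 2\sqrt{t}$ is just a slightly more explicit rendering of the same computation, and your careful justification of the cut inequality at intermediate indices is exactly the implicit step in the paper's proof.
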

    \begin{proof}
        We shall use induction on $k$.
        For $k = 0$ it is clear.
        Assume that $u_{2k-1} \ge \binom{k+1}{2}$.
        By the cut-condition we have that the number of edges between $V_{2k}$ and $V_{2k+1}$ is at least $u_{2k-1}$, hence $n_{2k}\cdot n_{2k+1} \ge u_{2k-1} \ge \binom{k+1}{2}$. 
        By the arithmetic-geometric mean inequality, $n_{2k} + n_{2k+1} \ge 2\sqrt{\binom{k+1}{2}} \ge k+1$.
        As $u_{2k+1} = u_{2k-1} + n_{2k} + n_{2k+1}$, we have $u_{2k+1} \ge \binom{k+2}{2}$.
    \end{proof}

    Now, let $A = \bigcup_{i=0}^{\lfloor d / 3 \rfloor}V_{i}$, $B = \bigcup_{i = \lfloor d / 3 \rfloor +1}^{2d/3} V_{i}$, $C = \bigcup_{i = \lfloor 2d/3 \rfloor + 1}^{d} V_{i}$.
    Observe, that $|A|, |C| \ge \min\left\{\frac{n}{2}, \frac{d^{2}}{100}\right\}$, so joining vertices in $A$ with vertices in $C$ requires at least $ \min\left\{\frac{n}{2},\frac{d^2}{100}\right\}\cdot\frac{d}{3}$ edges.
    Hence,
    \[ \min\left\{\frac{n}{2},\frac{d^2}{100}\right\}\cdot\frac{d}{3}\leq m,\]
    which implies
    \[d\leq \max\left\{\frac{6m}{n},16\sqrt[3]{m}\right\}.\]
    Notice that whenever $m \le 4n^{3/2}$ we have $d \le 16\sqrt[3]{m}$.
    Let us remark that if $m \ge \frac{1}{4}n^{3/2}$ then the upper bound is trivially satisfied by the general upper bound obtained in \cite{me_diam}. 

    For the lower bound, let $n$ and $2n \le m \le \frac{1}{4}n^{3/2}$ be given. 
    For any natural number $\ell$ we shall denote by $S_\ell$ the star $K_{1, \ell-1}$ on $\ell$ vertices.
    Consider the graph $G = P_{k}(G_{1},\dots,G_{k})$ on $n$ vertices,
    where $k = \left\lfloor \sqrt[3]{\frac{m}{2}-n} \right\rfloor$ and $G_{1} = G_{2} = \dots = G_{k} = S_{k}$, $G_{k+1} = S_{k^{2}}$, $G_{k+2} = S_{2}$, and $G_{k+3}$ is an empty graph on $n-2k^{2}-2$ vertices.

    Straightforward calculation shows that $u_i = i\cdot k$, for $i\leq k, u_{k+1}= 2k^2$, and $u_{k+2} = 2k^2 + 2$.
    Also $n_1n_2=n_2n_3=\ldots=n_{k-1}n_{k}=k^2$, $n_{k}n_{k+1}=k^3$, $n_{k+1}n_{k+2}=2k^2$, and $n_{k+2}n_{k+3}=2n-4k^2-4$.
    Therefore, for $i \in \left\{ 1, \dots, k+1 \right\}$ we have $n_{i} \cdot n_{i+1} \ge u_{i} \ge \min(u_{i}, n-u_{i})$ and $n_{k+2} \cdot n_{k+3} \ge n_{k+3} \ge \min(u_{k+2}, n-u_{k+2})$.
    Hence it follows from Lemma~\ref{blown-up} that $G$ is path-pairable.

    It is easy to check that the number of edges in $G$ is at most $2n + 2k^{3} \le m$.
    On the other hand, the diameter of $G$ is $k+2 \ge \sqrt[3]{\frac{m}{2} - n}$.

\section{Proof of Theorem \ref{diam_cdeg}}\label{sec:proofdiam_cdeg}

    In this section, we investigate the maximum diameter a path-pairable $c$-degenerate graph on $n$ vertices can have. 
    We shall assume that $c$ is an integer and $c\geq 5$. 

    Let $G$ be a $c$-degenerate graph on $n$ vertices with diameter $d$.
    We shall show first that $d \le 4\log_{\frac{c+1}{c}}(n)+3$.
    Let $x \in G$ be such that there is $y \in G$ with $d(x,y) = d$. 
    For $i \in \left\{ 0, \dots, d \right\}$, write $V_{i}$ for the set of vertices at distance $i$ from $x$.
    Let $n_{i} = |V_{i}|$ and $u_{i} = \sum_{j = 0}^{i}n_{j}$.
    Observe that $|V_{i}| \ge 1$ for every $i \in \left\{ 0, \dots, d \right\}$.
    We can assume that $u_{\lfloor \frac{d}{2} \rfloor} \le \frac{n}{2}$ (otherwise we repeat the argument below with $V'_{i} = V_{d-i})$.

    The result will easily follow from the following claim.
    \begin{claim}
        $u_{2k+1} \ge \left( \frac{c+1}{c} \right)^{k}$ as long as $u_{2k+1} \le \frac{n}{2}$.
    \end{claim}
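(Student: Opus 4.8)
The plan is to run the same inductive scheme as in the analogous claim for Theorem~\ref{diam_m}, but to replace the additive growth extracted there from the arithmetic--geometric mean inequality by a \emph{multiplicative} growth extracted from $c$-degeneracy. I would induct on $k$. The base case $k=0$ is immediate, since $u_1 = n_0 + n_1 \ge n_0 = 1 = (\tfrac{c+1}{c})^0$. For the inductive step I would assume the statement for $k-1$ and, under the hypothesis $u_{2k+1}\le n/2$, note that $u_{2k-1}\le u_{2k+1}\le n/2$, so the inductive hypothesis applies and gives $u_{2k-1}\ge(\tfrac{c+1}{c})^{k-1}$. The entire step then reduces to the single one-layer estimate $n_{2k}+n_{2k+1}\ge \tfrac1c u_{2k-1}$, because from it one immediately gets
\[
    u_{2k+1} = u_{2k-1} + n_{2k} + n_{2k+1} \ge u_{2k-1} + \tfrac{1}{c}u_{2k-1} = \tfrac{c+1}{c}\,u_{2k-1} \ge \left(\tfrac{c+1}{c}\right)^{k}.
\]

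To establish the one-layer estimate I would combine the two structural features at our disposal. \emph{First}, the cut-condition applied to $U_{2k}=\bigcup_{j=0}^{2k}V_j$ (legitimate, since $u_{2k}\le u_{2k+1}\le n/2$) produces at least $u_{2k}$ edges leaving $U_{2k}$; because the $V_j$ form the distance layers from $x$, every edge has its endpoints in the same or consecutive layers, so any edge out of $U_{2k}$ must join $V_{2k}$ to $V_{2k+1}$. Hence $e(V_{2k},V_{2k+1})\ge u_{2k}\ge u_{2k-1}$. \emph{Second}, $c$-degeneracy bounds the number of edges inside any vertex set $S$ by $c|S|$ (in a degeneracy ordering each vertex has at most $c$ later neighbours, and each edge is charged to its earlier endpoint), so the induced subgraph on $V_{2k}\cup V_{2k+1}$ has at most $c(n_{2k}+n_{2k+1})$ edges. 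Since $e(V_{2k},V_{2k+1})$ does not exceed this, we conclude $u_{2k-1}\le c(n_{2k}+n_{2k+1})$, which is exactly the estimate needed.

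The point requiring care is purely the bookkeeping of admissible cuts rather than any genuine obstacle: the cut-condition is only guaranteed for sets of size at most $n/2$, so I must apply it to $U_{2k}$ only while $u_{2k}\le n/2$, which is precisely what the running hypothesis $u_{2k+1}\le n/2$ (together with the earlier reduction $u_{\lfloor d/2\rfloor}\le n/2$) guarantees. Everything else is the short combination of the two displayed inequalities. Finally, to deduce $d\le 4\log_{(c+1)/c}(n)+3$, I would feed the bound $u_{2k+1}\ge(\tfrac{c+1}{c})^{k}$ back in: taking the largest admissible $k$ (with $2k+1$ comparable to $d/2$) and using $u_{2k+1}\le n$ forces $k=O(\log_{(c+1)/c} n)$, giving the claimed diameter bound.
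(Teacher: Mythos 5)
Your proposal is correct and follows essentially the same route as the paper: the same induction on $k$, with the cut-condition giving $e(V_{2k},V_{2k+1})\ge u_{2k}$ and $c$-degeneracy giving $e(V_{2k},V_{2k+1})\le c(n_{2k}+n_{2k+1})$, combined to get the multiplicative growth $u_{2k+1}\ge\frac{c+1}{c}u_{2k-1}$. The only (immaterial) difference is that you discard the $n_{2k}$ term via $u_{2k}\ge u_{2k-1}$ at the start, while the paper carries it along and drops it at the end.
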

    Let us assume the claim and prove the result. 
    Letting $k = \frac{\lfloor\frac{d}{2}\rfloor -1}{2}$, we have that $n/2 \ge u_{2k+1} \ge \left( \frac{c+1}{c} \right)^{\frac{\lfloor \frac{d}{2} \rfloor-1}{2}}$.
    Hence $d \leq 4\log_{\frac{c+1}{c}}(n)+3 = 4 \frac{\log(n)}{\log(\frac{c+1}{c})} + 3 \le 4 \frac{\log(n)}{\log(\frac{c}{c-2})}\frac{\log(\frac{c}{c-2})}{\log(\frac{c+1}{c})} + 3\le 12 \frac{\log(n)}{\log(\frac{c}{c-2})}+3$, where the last inequality follows from the easy to check fact that $\frac{\log(\frac{c}{c-2})}{\log(\frac{c+1}{c})} \le 3$, for all $c \ge 5$.
    \begin{proof}[Proof of the Claim]
        We shall prove the claim by induction on $k$.
        The base case when $k=0$ is trivial as $u_{1} \ge 2$.
        Suppose the claim holds for every $l \le k-1$.
        Since $G$ is $c$-degenerate we have that $e(V_{2k}, V_{2k+1}) \le c\left( n_{2k} + n_{2k+1} \right)$.
        On the other hand, it follows from the cut-condition that $e(V_{2k}, V_{2k+1}) \ge u_{2k} = u_{2k-1}+ n_{2k}$.
        Therefore, by the induction hypothesis, we have 
        $n_{2k} + n_{2k+1} \ge
        \frac{1}{c}\left( u_{2k-1}+n_{2k} \right) 
        \ge \frac{1}{c} \left( \left( {\frac{c+1}{c}}\right)^{k-1} +n_{2k} \right)
        \ge \frac{1}{c}\left( \frac{c+1}{c} \right)^{k-1}$. Hence, $u_{2k+1}=u_{2k-1}+n_{2k}+n_{2k+1}\geq \left( \frac{c+1}{c} \right)^{k-1}+ \frac{1}{c}\left( \frac{c+1}{c} \right)^{k-1}= \left( 1+\frac{1}{c} \right) \left( \frac{c+1}{c} \right)^{k-1} \ge \left( \frac{c+1}{c} \right)^{k}$, which proves the claim.
    \end{proof}
    We shall prove the lower bound assuming $c$ is an odd integer; when $c$ is even we apply the same argument for $c-1$.  
    
    To do so, consider the graph $G = P(G_{1},\ldots, G_{2m^{\prime}-1})$ for some $m^{\prime}\in \mathbb{N}$, which we specify later. 
    Firstly, we shall define the sizes of $G_i$ for $i \in \{1,\ldots, 2m'-1\}$. 
    To do so, let us define a sequence  $\{n_i\}_{i\in\mathbb{N}}$ where $n_{2i} =\frac{c-1}{2} $ and $n_{2i+1}$ is defined recursively in the following way: 

    \begin{equation}
        n_{2i+1}= \left \lceil \frac{2}{c-1} \cdot \sum_{j=1}^{2i} n_j \right \rceil \leq \frac{2}{c-1}\sum_{j=1}^{2i} n_j +1
    \end{equation}
    
    Let $m$ be the largest integer such that  $\sum_{j=1}^{m} n_j \leq n/2$. 
    We let $m'=m$ when $m$ is odd  and $m'=m-1$ when $m$ is even. 
    Moreover, let $|G_{m'}|=n-2\sum_{j=1}^{j=m'-1} n_j$ and let $|G_i|=n_i$ for $1\leq i < m'$ and $|G_{m'+j}|= |G_{m'-j}|$ for $ j \in \{1,\ldots, m'-1\}$. 

    For all $i\in \{1,\ldots 2m'-1\}$ let $G_i = S_{n_i}$ be a star on $n_i$ vertices. 
    It is easy to check that the graph $P_{2m'-1}(G_1,\ldots,G_{2m'-1}) $ is path-pairable by Lemma \ref{blown-up}.  
    It has diameter at least $2m-4$ and $m \geq \log_{\frac{c+1}{c-1}}(n)(1+o(1))$. 
    Again an easy verification shows that the graph $G$ is $c$-degenerate.

\section{Final remarks and open problems}\label{sec:final}

    We obtained tight bounds on the parameter $d(n, \mathcal{G}_{m})$ when $(2+\epsilon)n \leq m \leq  \frac{1}{4} n^{3/2}$, for any fixed $\epsilon >0$. 
    It is an interesting open problem to investigate what happens when the number of edges in a path-pairable graph on $n$ vertices is around $2n$. We ask the following:
    \begin{question}
        Is there a function $f$ such that for every $\epsilon>0$ and for every path-pairable graph $G$ on $n$ vertices with at most $(2-\epsilon)n$ edges, the diameter of $G$ is bounded by $f(\epsilon)$?
    \end{question}

    Another line of research concerns determining the behaviour of $d(n, \mP)$, where $\mP$ is the family of planar graphs. Since planar graphs are $5$-degenerate, it follows from Theorem~\ref{diam_cdeg} that the diameter of a path-pairable planar graph on $n$ vertices cannot be larger than $c \log{n}$.
    This fact makes us wonder whether there are path-pairable planar graphs with unbounded diameter.
    \begin{question}
        Is there a family of path-pairable planar graphs with arbitrarily large diameter?
    \end{question}    
    
    The graph constructed in the proof of the lower bound in Theorem~\ref{diam_cdeg} when $c=5$ is not planar since it contains a copy of $K_{3,3}$. Therefore, it cannot be used to show that the diameter of a path-pairable planar graph can be arbitrarily large (note, however, that this graph does not contain a $K_7$-minor nor a $K_{6,6}$-minor). We end by remarking that we were able to construct an infinite family of path-pairable planar graphs with diameter $6$, but not larger.

\bibliographystyle{acm}
\bibliography{PPPgraphs_diam.bib}
\end{document}